\newcommandx{\todoin}[2][1=]{\todo[inline, caption={todo}, #1]{%
\begin{minipage}{\textwidth-20pt}#2\end{minipage}}}
\newcommand{\vcxymatrix}[1]{\vcenter{\xymatrix{#1}}}
\newcounter{proof}
\newenvironment{myproof}%
{\stepcounter{proof}\begin{proof}}%
{\end{proof}}%
\newcounter{proofstep}[proof]
\newenvironment{proofstep}[1][]%
{\refstepcounter{proofstep}\bigskip\par\noindent%
  \ifthenelse{\isempty{#1}}
    {\textsc{Step \theproofstep. }}
    {\textsc{#1.}}
  \noindent}%
{\par}%
\newcounter{proofcase}[proof]
\newenvironment{proofcase}[1][]%
{\refstepcounter{proofcase}\bigskip\par\noindent%
  \ifthenelse{\isempty{#1}}
    {\textsc{Case \theproofcase. }}
    {\textsc{#1.}}
  \noindent}%
{\par}%
\theoremstyle{plain}
\newtheorem{thm}{Theorem}[section]
\newtheorem*{thm*}{Theorem}
\newtheorem{lem}[thm]{Lemma}
\newtheorem{question}[thm]{Question}
\theoremstyle{definition}
\theoremstyle{remark}
\newtheorem{rem}[thm]{Remark}
\numberwithin{equation}{section}
\newcommandx{\textref}[2][1=]{\hyperref[#2]{#1\ref*{#2}}}
\newcommandx{\textrefp}[2][1=]{(\hyperref[#2]{#1\ref*{#2}})}
\newcommand{\dif}{\ensuremath{\, \mathrm d}}
\DeclareMathOperator{\sign}{sign}
\DeclareMathOperator{\Id}{Id}
\DeclareMathOperator{\spn}{span}
\DeclareMathOperator{\diag}{diag}
\DeclareMathOperator{\cond}{\mathbb{E}}
\DeclareMathOperator{\prob}{\mathbb{P}}
\newcommand{\bmo}{\ensuremath{\mathrm{BMO}}}
\begin{document}

\title[Dimension dependence of factorization problems: Hardy spaces and $SL_n^\infty$]{Dimension
  dependence of factorization problems: Hardy spaces and $SL_n^\infty$}

\author[R.~Lechner]{Richard Lechner}

\address{Richard Lechner, Institute of Analysis, Johannes Kepler University Linz, Altenberger
  Strasse 69, A-4040 Linz, Austria}

\email{richard.lechner@jku.at}

\date{\today}

\subjclass[2010]{46B07, 30H10, 46B25, 60G46}

\keywords{Factorization, local theory, classical Banach spaces, Hardy spaces, \bmo, $SL^\infty$}

\thanks{Supported by the Austrian Science Foundation (FWF) Pr.Nr. P28352}

\begin{abstract}
  Given $1 \leq p < \infty$, let $W_n$ denote the finite-dimensional dyadic Hardy space $H_n^p$, its
  dual or $SL_n^\infty$.  We prove the following quantitative result: The identity operator on $W_n$
  factors through any operator $T : W_N\to W_N$ which has large diagonal with respect to the Haar
  system, where $N$ depends \emph{linearly} on $n$.
\end{abstract}

\maketitle

\makeatletter \providecommand\@dotsep{5} \def\listtodoname{List of Todos}
\def\listoftodos{\@starttoc{tdo}\listtodoname} \makeatother

\section{Introduction}\label{sec:intro}

\noindent
Local theory of Banach spaces is concerned with the quantitative study of finite dimensional Banach
spaces and their relation to infinite dimensional spaces and operators.  To illustrate, we give the
following example.

Suppose that for each $n\in\mathbb{N}$, the $n$-dimensional Banach space $X_n$ has a normalized
$1$-unconditional basis $e_j$, $1\leq j\leq n$, and let $e_j^*\in X_n^*$, $1\leq j \leq n$ denote
the associated coordinate functionals.
\begin{question}\label{que:factor}
  Given $n\in\mathbb{N}$ and $\delta,\Gamma,\eta > 0$, what is the smallest integer
  $N=N(n,\delta,\Gamma,\eta)$, such that for any operator $T : X_N\to X_N$ satisfying
  \begin{equation}\label{eq:que:factor:hypo}
    \|T\|\leq \Gamma
    \qquad\text{and}\qquad
    |\langle e_j^*, T e_j\rangle|
    \geq \delta,
    \quad 1\leq j\leq N,
  \end{equation}
  there are there operators $E : X_n\to X_N$ and $F : X_N\to X_n$ such that the diagram
  \begin{equation}\label{eq:que:factor:diagram}
    \vcxymatrix{X_n \ar[rr]^{\Id_{X_n}} \ar[d]_E && X_n\\
      X_N \ar[rr]_T && X_N \ar[u]_F}
    \qquad \|E\| \|F\| \leq \frac{1+\eta}{\delta}
  \end{equation}
  is commutative?
\end{question}
Note that the diagonal operator $D : X_n\to X_n$ given by $D = \delta \Id_{X_n}$, where $\Id_{X_n}$
denotes the identity operator on $X_n$, shows that for every choice for $E$ and $F$ we have
$\|E\| \|F\| \geq \frac{1}{\delta}$.

Naturally, we are interested in estimates for $N = N(n,\delta,\Gamma,\eta)$, especially in the
relation between $N$ and $n$.  For many Banach spaces, we have quantitative estimates for $N$ (see
e.g.~\cite{bourgain:1983,bourgain:tzafriri:1987,mueller:1988,blower:1990,wark:2007:class,mueller:2012,lechner:mueller:2014,lechner:2016-factor-mixed,lechner:2017-local-factor-SL}).
One would hope to obtain \emph{linear estimates} for $N$ in $n$, which, for example, has been
achieved by J.~Bourgain and L.~Tzafriri in~\cite{bourgain:tzafriri:1987} for $X_n = \ell_n^p$,
$1\leq p \leq \infty$.  However, for many other Banach spaces, the best known estimates are often
\emph{super-exponential}.

For instance, P.~F.~X.~Müller showed that for $X_{d_n} = H_n^1$, $X_{d_n} = (H_n^1)^*$
(see~\cite{mueller:1988}) and $X_{d_n} = L^p$, $1 < p < \infty$ (see~\cite{mueller:2012}), where
$d_n = {2^{n+1}-1}$, the estimate for $N$ is a \emph{nested exponential}, e.g.
\begin{equation*}
  N\leq 2^{8^n 2^{8^{n-1}2^{8^{n-2}2^{8^{n-3}2^{\iddots}}}}}.
\end{equation*}
Another example where $N$ is estimated by a nested exponential in $n$, is the one parameter space
$X_{d_n} = SL_n^\infty$ (see~\cite{lechner:2017-local-factor-SL}); a similar statement is true for
the bi-parameter mixed norm Hardy spaces $H_n^p(H_n^q)$, $1\leq p,q < \infty$ and their duals
(see~\cite{lechner:2016-factor-mixed}).

The cause for the super-exponential growth in the previous three examples can be pinpointed exactly:
the use of \emph{combinatorics}.  In this work, we introduce a new method, which replaces these
combinatorics with an \emph{entirely probabilistic} approach.  Consequently, we obtain for
$X_{d_n} = H_n^p$, $X_{d_n} = (H_n^p)^*$, $1\leq p < \infty$ and $X_{d_n} = SL_n^\infty$ (see
Theorem~\ref{thm:results:factor-1d}) the estimate
\begin{equation*}
  N \leq c n,
  \qquad\text{where $c=c(\delta,\Gamma,\eta) > 0$}.
\end{equation*}

\section{Notation}\label{sec:notation}

\noindent
The collection of \emph{dyadic intervals} $\mathcal{D}$ contained in the unit interval~$[0,1)$ is
given by
\begin{equation*}
  \mathcal{D} = \{[(k-1)2^{-n},k2^{-n}) : n\in \mathbb{N}_0, 1\leq k\leq 2^n\}.
\end{equation*}
Let $|\cdot|$ denote the Lebesgue measure.  For any $N\in\mathbb{N}_0$ we put
\begin{equation}\label{eq:dyadic-intervals}
  \mathcal{D}_N = \{I\in\mathcal{D} : |I|=2^{-N}\}
  \qquad\text{and}\qquad
  \mathcal{D}_{\leq N} = \bigcup_{n=0}^N\mathcal{D}_n.
\end{equation}
Given $n\in\mathbb{N}_0$ and a dyadic interval $I\in\mathcal{D}_n$, we define
$I^-, I^+\in\mathcal{D}_{n+1}$ by
\begin{equation}\label{eq:dyadic-intervals:children}
  I^+ \cup I^- = I
  \qquad\text{and}\qquad
  \inf I^+ < \inf I^-.
\end{equation}

The $L^\infty$-normalized \emph{Haar system} $h_I$, $I\in\mathcal{D}$ is given by
\begin{equation}\label{eq:haar-system}
  h_I = \chi_{I^+}-\chi_{I^-},
  \qquad I\in\mathcal{D},
\end{equation}
where $\chi_A$ denotes the characteristic function of a set $A\subset[0,1)$.

Given $1\leq p < \infty$, the \emph{Hardy space} $H^p$ is the completion of
\begin{equation*}
  \spn\{ h_I : I \in \mathcal{D} \}
\end{equation*}
under the square function norm
\begin{equation}\label{eq:Hp-norm}
  \Big\| \sum_{I\in \mathcal{D}} a_I h_I \Big\|_{H^p}
  = \biggl( \int_0^1 \Big(
  \sum_{I\in \mathcal{D}} a_I^2 h_I^2(x)
  \Big)^{p/2}
  \dif x
  \biggr)^{1/p}
  .
\end{equation}
For each $n\in\mathbb{N}_0$, we define the finite dimensional space
\begin{equation}\label{eq:finite-dimensional-spaces:Hp}
  H_n^p = \spn\{h_I : I\in\mathcal{D}_{\leq n}\}\subset H^p.
\end{equation}

The \emph{non-separable} Banach space $SL^\infty$ is given by
\begin{equation}\label{eq:sl-infty:space}
  SL^\infty
  = \{f\in L^2 : \|f\|_{SL^\infty} < \infty\},
\end{equation}
equipped with the norm
\begin{equation}\label{eq:sl-infty:norm}
  \Big\| \sum_{I\in\mathcal{D}} a_I h_I \Big\|_{SL^\infty}
  = \Big\| \Bigl(\sum_{I\in\mathcal{D}} a_I^2 h_I^2\Bigr)^{1/2} \Big\|_{L^\infty}.
\end{equation}
For all $n\in\mathbb{N}_0$, we define the finite dimensional space
\begin{equation}\label{eq:finite-dimensional-spaces:SL}
  SL_n^\infty
  = \spn\{h_I : I\in\mathcal{D}_{\leq n}\}\subset SL^\infty.
\end{equation}

We define the \emph{duality pairing}
$\langle \cdot, \cdot \rangle : SL^\infty\times H^1\to \mathbb{R}$ by
\begin{equation}\label{eq:scalar-product}
  \langle f, g\rangle = \int_0^1 f(x)g(x)\dif x,
  \qquad f\in SL^\infty,\ g\in H^1.
\end{equation}
An elementary computation (see e.g.~\cite{garsia:1973}) shows that
\begin{equation}\label{eq:bracket-estimate}
  |\langle f, g\rangle| \leq \|f\|_{SL^\infty} \|g\|_{H^1},
  \qquad f\in SL^\infty,\ g\in H^1.
\end{equation}

\section{Main result}\label{sec:results}

\noindent
Let $1\leq p < \infty$ and recall that we put $d_n = 2^{n+1}-1$, $n\in\mathbb{N}$.  Our main result
Theorem~\ref{thm:results:factor-1d} gives a quantitative estimate for $N = N(n,\delta,\Gamma,\eta)$ in
Question~\ref{que:factor} for the spaces $W_{d_n} = H_n^p$, $W_{d_n} = (H_n^p)^*$ and
$W_{d_n} = SL_n^\infty$.
\begin{thm}\label{thm:results:factor-1d}
  Let $1 \leq p < \infty$, and let $(W_k : k\in\mathbb{N})$ denote one of the following three
  sequences of spaces:
  \begin{align}\label{eq:thm:results:factor-1d:spaces}
    &(H_k^p : k\in\mathbb{N}), &&((H_k^p)^* : k\in\mathbb{N}),
    &&(SL_k^\infty : k\in\mathbb{N}).
  \end{align}
  Let $n \in \mathbb N$ and $\delta > 0,\Gamma,\eta > 0$.  Define the integer
  $N=N(n,\delta,\Gamma,\eta)$ by the formula
  \begin{equation}\label{eq:thm:results:factor-1d:dim}
    N
    = 19(n+2) + \bigl\lfloor
    4\log_2(\Gamma/\delta) + 4\log_2\bigl(1+\eta^{-1}\bigr)
    \bigr\rfloor
    .
  \end{equation}
  Then for any operator $T : W_N\rightarrow W_N$ satisfying
  \begin{equation}\label{eq:thm:results:factor-1d:large}
    \|T\|\leq \Gamma
    \qquad\text{and}\qquad
    |\langle T h_K, h_K \rangle|
    \geq \delta |K|,
    \quad K\in \mathcal{D}_{\leq N},
  \end{equation}
  there exist bounded linear operators $E : W_n\to W_N$ and $F : W_N\to W_n$, such that the diagram
  \begin{equation}\label{eq:thm:results:factor-1d:diag}
    \vcxymatrix{W_n \ar[rr]^{\Id_{W_n}} \ar[d]_E && W_n\\
      W_N \ar[rr]_T && W_N \ar[u]_F}
    \qquad \|E\|\|F\| \leq \frac{1+\eta}{\delta}
  \end{equation}
  is commutative.
\end{thm}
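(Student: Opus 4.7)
The strategy is to find, for each $J \in \mathcal{D}_{\leq n}$, a block
\[
b_J = \sum_{I \in \mathcal{B}_J} \varepsilon_I h_I,
\qquad
\mathcal{B}_J \subset \mathcal{D}_{\leq N},
\qquad
\varepsilon_I \in \{\pm 1\},
\]
where the collections $\mathcal{B}_J$ are pairwise disjoint tilings of the intervals $J$, arranged so as to respect the dyadic order on $\mathcal{D}_{\leq n}$, together with a biorthogonal block $b_J^*$. By the $1$-unconditionality of the Haar basis in each of the three norms $H^p$, $(H^p)^*$, $SL^\infty$, and the fact that $\sum_{I \in \mathcal{B}_J} h_I^2 = \chi_J = h_J^2$, the family $(b_J)_J$ is isometrically $1$-equivalent to $(h_J)_{J \in \mathcal{D}_{\leq n}}$ regardless of how the signs $\varepsilon_I$ are chosen. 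I would then set
\[
E h_J = b_J,
\qquad
F f = \sum_{J \in \mathcal{D}_{\leq n}} \lambda_J^{-1} \langle f, b_J^* \rangle\, h_J
\quad\text{with}\quad
\lambda_J = \langle T b_J, b_J^* \rangle,
\]
which makes $F T E = \Id_{W_n} + R$ where $R h_J = \sum_{K \neq J} \lambda_K^{-1} \langle T b_J, b_K^* \rangle\, h_K$. Provided $|\lambda_J| \geq \delta$ and $\|R\| \leq \eta/(1+\eta)$, the Neumann-series correction $F' = (\Id + R)^{-1} F$ delivers an exact factorization with $\|E\|\|F'\| \leq (1+\eta)/\delta$.

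The collections $\mathcal{B}_J$ would be organised hierarchically: attach to each level $k \in \{0,\dots,n\}$ of the tree $\mathcal{D}_{\leq n}$ a scale $\mu(k) \in \{0,\dots,N\}$ with uniform gap $\Delta = \mu(k+1) - \mu(k)$, and let $\mathcal{B}_J$ for $J \in \mathcal{D}_k$ consist of dyadic intervals of length $2^{-\mu(k)}$ tiling $J$. This forces $N = \mu(n)$, and the bound~\eqref{eq:thm:results:factor-1d:dim} amounts to taking $\Delta = 19$ together with a constant offset absorbing the large-deviation threshold of the probabilistic step below.

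The heart of the argument, which replaces the combinatorial stopping-time constructions of~\cite{mueller:1988, mueller:2012, lechner:2017-local-factor-SL}, is the random choice of the signs $\varepsilon_I$. Taking them to be a Rademacher system on $\bigcup_J \mathcal{B}_J$, each off-diagonal pairing $\langle T b_J, b_K^* \rangle$ is a bilinear Rademacher sum with no diagonal terms, and direct expansion yields a second-moment bound of the form
\[
\mathbb{E}_\varepsilon\, |\langle T b_J, b_K^* \rangle|^2 \leq C\,\Gamma^2 \cdot 2^{-\Delta},
\]
reflecting the depth of each block. A union bound over the at most $d_n^2$ pairs $(J,K)$ then produces a single realisation of the signs for which $\|R\| \leq \eta/(1+\eta)$; the required threshold is $\Delta \gtrsim \log_2 d_n^2 + O(\log_2(\Gamma/\delta) + \log_2(1+\eta^{-1}))$, which is linear in $n$ with an explicit constant---this is precisely the source of the linear dimension bound.

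The main obstacle will be the $SL^\infty$ case. There the norm is the $L^\infty$ of a square function rather than an integrable $L^p$, so a pure second-moment Rademacher estimate does not directly control the operator norm of $R$; a sub-Gaussian / level-set deviation inequality is required to convert the almost-sure near-orthogonality of the $b_J$'s into an $L^\infty$ bound on the square function of $\sum_J c_J b_J$. A secondary, more routine, issue is the calibration of the diagonal: the hypothesis~\eqref{eq:thm:results:factor-1d:large} gives a lower bound only on individual Haar functions, so one must first arrange the signs $\varepsilon_I$ within each $\mathcal{B}_J$ so that the contributions $\langle T h_I, h_I \rangle$ add up coherently to yield $|\lambda_J| \geq \delta$, and only then run the probabilistic step on the off-diagonal. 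Finally, the dual case $W_n = (H_n^p)^*$ is reduced to $W_n = H_n^p$ by transposition, since $T^* : H_N^p \to H_N^p$ inherits the same hypotheses after identifying the Haar basis with its biorthogonal up to normalisation.
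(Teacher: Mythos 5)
Your overall strategy---random signs on a block basis to almost-diagonalize $T$, second-moment estimates plus a union bound to pick a good realization, and a Neumann series to finish---is exactly the method of the paper. However, two of the things you flag as secondary issues are in fact where the argument either needs a different idea than the one you sketch, or where your sketch breaks.

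\paragraph{Diagonal calibration.} You propose to ``arrange the signs $\varepsilon_I$ within each $\mathcal{B}_J$ so that the contributions $\langle T h_I, h_I\rangle$ add up coherently.'' This cannot work: if you use the same signs on $b_J$ and $b_J^*$ (as you must, to preserve biorthogonality), then in
\begin{equation*}
  \langle T b_J, b_J\rangle
  = \sum_{I,I'\in\mathcal{B}_J} \varepsilon_I \varepsilon_{I'} \langle T h_I, h_{I'}\rangle
\end{equation*}
the diagonal terms come with $\varepsilon_I^2 = 1$, so the contribution $\sum_{I\in\mathcal{B}_J}\langle T h_I, h_I\rangle$ is completely independent of the signs. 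What actually resolves the problem (and what the paper does) is to precompose $T$ with the isometric multiplier $M : h_K \mapsto \sign(\langle T h_K, h_K\rangle)\, h_K$; this converts the hypothesis $|\langle T h_K, h_K\rangle| \geq \delta |K|$ into $\langle (TM) h_K, h_K\rangle \geq \delta |K|$, after which the within-block diagonal sums are deterministically bounded below by $\delta |B_J|$, and the random fluctuation $Z_J$ around this value is controlled by the variance estimate.

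\paragraph{Boundedness of $F$ and the structure of the blocks.} You assert that choosing $\mathcal{B}_J$, for $J\in\mathcal{D}_k$, to be a tiling of $J$ by intervals of length $2^{-\mu(k)}$ makes $(b_J)$ a ``good'' block basis. The isometric embedding $E$ is indeed automatic from $\sum_{K\in\mathcal{B}_J}h_K^2 = \chi_J = h_J^2$, but the boundedness of the dual operator $F$ (equivalently, of the projection onto $\spn\{b_J\}$) is the hard part in $H^1$ and $(H^1)^*$, and it requires Jones' compatibility condition---in particular condition~\textrefp[C]{enu:c4}. Your scale-by-level tilings fail~\textrefp[C]{enu:c4}: taking $I_0 = J^+\subset J$ and $K\in\mathcal{B}_J$ with $K\subset J^-$, one gets $|K\cap B_{I_0}|/|K| = 0$, while $|B_{I_0}|/|B_J| = 1/2$. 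The paper instead uses a Gamlen--Gaudet style construction ($\mathcal{B}_{[0,1)} = \mathcal{D}_{m_0}$ and $\mathcal{B}_{I^\pm} = \{K^\pm : K\in\mathcal{B}_I\}$), in which $B_{I^+}$ is \emph{spread uniformly} throughout $B_I$ rather than confined to $I^+$; this is exactly what makes~\textrefp[C]{enu:c4} hold with $\kappa = 1$. Without this, the projection estimate for $H^1$ (Jones' theorem) is unavailable and $\|F\|$ is uncontrolled.

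\paragraph{Two minor points.} Your worry about the $SL^\infty$ case is misplaced: the paper does not attempt a sharp operator-norm bound on the error $R$, but simply bounds it by the number of entries times $\eta_0$ (via $1$-unconditionality and the triangle inequality), and this crude bound works uniformly across $H^p$, $(H^p)^*$ and $SL^\infty$ because $\eta_0$ is chosen exponentially small in $n$. Relatedly, your parametrization ``$N = \mu(n)$ with $\Delta = 19$ and a constant offset'' understates what is needed: the coarsest block scale $\mu(0)$ (not the inter-level gap $\Delta$) controls the variance $\cond Y_{J,K}^2 \leq \Gamma^2\alpha^{1/2}$ with $\alpha = 2^{-\mu(0)}$, and to beat the union bound over $\sim 4^{n+1}$ pairs this offset must itself grow linearly in $n$; in the paper the construction uses unit gaps between levels and an initial offset $m_0$ of order $(n+2)\log_2(\ldots)$, which is where the factor $19(n+2)$ comes from.
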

Firstly, we remark that the linear dependence of $N$ on $n$ amounts to a polynomial dependence of
the dimensions of the respective spaces; i.e. $\dim W_N$ is a polynomial in $\dim W_n$.

Secondly, although very similar in spirit, since the results of~\cite{bourgain:tzafriri:1987}
concern operators with large diagonal with respect to the standard unit vector basis in $\ell_n^p$,
the results in~\cite{bourgain:tzafriri:1987} are not applicable in the context of
Theorem~\ref{thm:results:factor-1d}, which is concerned with operators having large diagonal with respect
to the Haar system.

Thirdly, the novelty of Theorem~\ref{thm:results:factor-1d} is the above
formula~\eqref{eq:thm:results:factor-1d:dim} for $N$, specifically the \emph{linear} relation between $N$
and $n$.  Indeed, we point out that for the previous results 
\begin{itemize}
\item $(W_k : k\in\mathbb{N}) = (H_k^1 : k\in\mathbb{N})$ and
  $(W_k : k\in\mathbb{N}) = ((H_k^1)^* : k\in\mathbb{N})$ in~\cite{mueller:1988},
\item $(W_k : k\in\mathbb{N}) = (H_k^p : k\in\mathbb{N})$, $1 < p < \infty$ in~\cite{mueller:2012},
\item $(W_k : k\in\mathbb{N}) = (SL_k^\infty : k\in\mathbb{N})$ in~\cite{lechner:2017-local-factor-SL},
\end{itemize}
the relation between $N$ and $n$ is \emph{super-exponential}.  The cause for this growth is the use
of \emph{combinatorics}.  In a first step, these combinatorial methods are used to almost
diagonalize the operator $T$, and then, in a second step, probabilistic arguments are employed to
preserve the large diagonal of $T$.

By contrast, our new and \emph{entirely probabilistic} approach almost diagonalizes $T$ and
preserves its large diagonal in a \emph{single step} (see Section~\ref{sec:random}).

\section{Random block bases}\label{sec:random}

\noindent
Given $1 \leq p < \infty$, let $W_N$ denote either $W_N = H^p$, $W_N = (H^p)^*$ or
$W_N = SL_N^\infty$.  In this section, we will show that every operator $T : W_N\to W_N$ is almost
diagonalized by random block bases $\theta\mapsto b_I^{(\theta)}\subset W_N$,
$I\in\mathcal{D}_{\leq n}$.

To this end, let $\prob$ denote the uniform measure on $\{\pm 1\}^{\mathcal{D}}$, and let
$\cond$ denote the expectation with respect to the probability measures $\prob$.  Given
$n,N\in\mathbb{N}$ and pairwise disjoint sets $\mathcal{B}_I\subset\mathcal{D}_{\leq N}$,
$I\in\mathcal{D}_{\leq n}$, we define the \emph{random block basis}
\begin{equation}\label{eq:b-1d:dfn}
  b_I^{(\theta)}
  = \sum_{K\in\mathcal{B}_I} \theta_K h_K,
  \qquad I\in\mathcal{D}_{\leq n},\ \theta\in\{\pm 1\}.
\end{equation}
Given a linear operator $T : W_N\to W_N$, we define the random variables $Y_{I,I'}, Z_I$ by putting
\begin{subequations}\label{eq:dfn:rv-1d}
  \begin{align}
    Y_{I,I'}(\theta)
    & = \langle T b_I^{(\theta)},
      b_{I'}^{(\theta)}\rangle,
    & I, I'\in\mathcal{D}_{\leq n},\ I\neq I',\ \theta\in\{\pm 1\},&
       \label{eq:dfn:rv-1d:Y}\\
    Z_I(\theta)
    & = \langle T b_I^{(\theta)}, b_I^{(\theta)}\rangle
      - \sum_{K\in\mathcal{B}_I} \langle Th_K, h_K\rangle,
    & I\in\mathcal{D}_{\leq n},\ \theta\in\{\pm 1\}.&
       \label{eq:dfn:rv-1d:Z}
  \end{align}
\end{subequations}
The following Theorem~\ref{thm:var-1d} asserts that the matrix-valued random variable
$\theta\mapsto (\langle T b_I^{(\theta)}, b_{I'}^{(\theta)}\rangle)_{I,I'\in\mathcal{D}_{\leq n}}$
is for the most part (depending on the collections $\mathcal{B}_I$, $I\in\mathcal{D}_{\leq n}$)
centered around the diagonal matrix
$\diag\bigl(\sum_{K\in\mathcal{B}_I} \langle Th_K, h_K\rangle\bigr)_{I\in\mathcal{D}_{\leq n}}$.
\begin{thm}\label{thm:var-1d}
  Let $n,N\in\mathbb{N}$, and let $\mathcal{B}_I\subset\mathcal{D}_{\leq N}$,
  $I\in\mathcal{D}_{\leq n}$ denote non-empty collections of dyadic intervals satisfying
  \begin{subequations}\label{eq:thm:var-1d:coll}
    \begin{align}
      \mathcal{B}_I\cap \mathcal{B}_{I'}
      &= \emptyset,
      &&I,I'\in\mathcal{D}_{\leq n}, I\neq I'
         \label{eq:thm:var-1d:coll:a}\\
      K\cap K' &= \emptyset,
      &&K, K'\in\mathcal{B}_I, K\neq K',\ I\in\mathcal{D}_{\leq n}.
         \label{eq:thm:var-1d:coll:b}
    \end{align}

  \end{subequations}

  Define $\alpha$ by
  putting
  \begin{equation}\label{eq:alpha-small-1d}
    \alpha = \max\{|K| : K\in\mathcal{B}_I,\ I\in\mathcal{D}_{\leq n}\}.
  \end{equation}
  Given $1 \leq p < \infty$, let $W_N$ denote either $W_N = H^p$, $W_N = (H^p)^*$ or
  $W_N = SL_N^\infty$.  Then for any operator $T : W_N\to W_N$, we have that
  \begin{align}\label{eq:lem:var-1d:exp}
    \cond Y_{I,I'}
    &= \cond Z_I
      = 0,
    && I,I'\in\mathcal{D},\ I\neq I',
  \end{align}
  and the random variables $Y_{I,I'},Z_I$ satisfy the estimates
  \begin{align}
    \label{eq:lem:var-1d:var}
    \cond Y_{I,I'}^2 &\leq \|T\|^2 \alpha^{1/2},
    &\cond Z_I^2 &\leq 2 \|T\|^2 \alpha^{1/2},
  \end{align}
  for all $I,I'\in\mathcal{D},\ I\neq I'$.
\end{thm}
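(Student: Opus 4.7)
The plan is to expand $Y_{I,I'}$ and $Z_I$ as Rademacher polynomials in $\theta$, compute their moments via independence, and then reduce the variance estimate to bounding $\sum_{K,K'}\langle Th_K, h_{K'}\rangle^2$ through Bessel's inequality combined with an $L^2$-embedding of $W_N$ or $W_N^*$.

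First I would write
\[
  Y_{I,I'}(\theta)
  = \sum_{K \in \mathcal{B}_I,\, K' \in \mathcal{B}_{I'}}
  \theta_K \theta_{K'} \langle Th_K, h_{K'}\rangle,
\]
and observe that the diagonal terms $K = K'$ appearing in $\langle Tb_I^{(\theta)}, b_I^{(\theta)}\rangle$ produce exactly $\sum_K \langle Th_K, h_K\rangle$, which cancels the subtracted sum in the definition of $Z_I$; hence $Z_I(\theta) = \sum_{K \neq K'} \theta_K \theta_{K'} \langle Th_K, h_{K'}\rangle$. Condition \eqref{eq:thm:var-1d:coll:a} guarantees that every surviving Rademacher monomial involves two distinct indices, so independence gives $\cond Y_{I,I'} = \cond Z_I = 0$. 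Expanding the squares and using that $\cond[\theta_{K_1}\theta_{K_2}\theta_{K_3}\theta_{K_4}]$ equals $1$ exactly when each index appears with even multiplicity, one checks that in $Y_{I,I'}^2$ only the pairing $K_1 = K_2,\ K_1' = K_2'$ survives, yielding $\cond Y_{I,I'}^2 = \sum_{K,K'} \langle Th_K, h_{K'}\rangle^2$; in $Z_I^2$ there is additionally the transposed pairing $K_1 = K_2',\ K_1' = K_2$ producing cross terms $\langle Th_K, h_{K'}\rangle\langle Th_{K'}, h_K\rangle$, which I would absorb via $|ab| \leq \tfrac{1}{2}(a^2+b^2)$ and symmetry of the summation to get $\cond Z_I^2 \leq 2\sum_{K,K' \in \mathcal{B}_I} \langle Th_K, h_{K'}\rangle^2$.

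The core estimate to establish is
\[
  \sum_{K \in \mathcal{B}} \sum_{K' \in \mathcal{B}'} \langle Th_K, h_{K'}\rangle^2 \leq \alpha \|T\|^2
\]
for any two disjoint collections $\mathcal{B}, \mathcal{B}'$ of pairwise disjoint dyadic intervals of measure at most $\alpha$; this already implies the theorem, since $\alpha \leq 1$ gives $\alpha \leq \alpha^{1/2}$. Two ingredients suffice. First, the functions $h_{K'}/|K'|^{1/2}$ for $K' \in \mathcal{B}'$ form an $L^2$-orthonormal system by \eqref{eq:thm:var-1d:coll:b}, so Bessel yields $\sum_{K'}\langle Th_K, h_{K'}\rangle^2 \leq \alpha\|Th_K\|_{L^2}^2$. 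Second, the $L^2$-Khintchine identity integrates the pointwise equality $\cond_\theta(Tb^{(\theta)})^2(x) = \sum_K (Th_K)^2(x)$ to $\sum_K\|Th_K\|_{L^2}^2 = \cond_\theta\|Tb^{(\theta)}\|_{L^2}^2$, where $b^{(\theta)} = \sum_{K \in \mathcal{B}}\theta_K h_K$.

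The final piece $\cond_\theta\|Tb^{(\theta)}\|_{L^2}^2 \leq \|T\|^2$ is the main obstacle, because it is the only step where the specific structure of the three families $H^p$, $(H^p)^*$, $SL^\infty$ intervenes. My plan is to observe that in each case at least one of $W_N, W_N^*$ embeds continuously into $L^2$ with constant~$1$: for $V = SL^\infty$ or $V = H^q$ with $q \geq 2$ this is the trivial bound $\|f\|_{L^2} = \|S(f)\|_{L^2} \leq \|S(f)\|_{L^q}$; for $V = (H^1)^*$, testing the dual norm against $v/\|v\|_{H^1}$ together with $\|v\|_{H^1} \leq \|v\|_{L^2}$ (Cauchy--Schwarz on $[0,1)$) yields $\|v\|_{(H^1)^*} \geq \|v\|_{L^2}$. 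Using $\langle Th_K, h_{K'}\rangle = \langle T^* h_{K'}, h_K\rangle$ I may pass to the Banach-space adjoint whenever needed to arrive at the favorable case $V = W_N$; the pairwise disjointness of the $K$'s in $\mathcal{B}$ then makes $\|b^{(\theta)}\|_V \leq 1$ an immediate direct computation in every relevant $V$, which closes the argument.
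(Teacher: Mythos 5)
Your proof is correct, and it takes a genuinely different route from the paper's. Both arguments begin identically: compute $\cond Y_{I,I'}^2 = \sum_{K\in\mathcal{B}_I,\,K'\in\mathcal{B}_{I'}}\langle Th_K,h_{K'}\rangle^2$, identify the surviving pairings (K2) and (K4) in $\cond Z_I^2$, and absorb the transposed (K4) sum into the (K2) sum at a cost of a factor $2$ (you do this via $|ab|\le\tfrac12(a^2+b^2)$; the paper instead re-runs the duality argument on (K4) directly, which gives the same factor). Where you diverge is the core estimate for $\sum_{K,K'}\langle Th_K,h_{K'}\rangle^2$. The paper uses the pointwise matrix bound $|\langle Th_K,h_{K'}\rangle|\le\|T\|\,|K|^{1/p}|K'|^{1/p'}$, rewrites the double sum as a single pairing, estimates it via the $(H^p)^*$- (resp.\ $H^p$-) norm of the inner block, applies H\"older, and then interpolates the two resulting bounds $\alpha^{1/p}$ and $\alpha^{1/p'}$ to reach $\alpha^{1/2}$. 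Your route instead uses only three structural facts: (i) Bessel's inequality for the $L^2$-orthonormal system $\{h_{K'}/|K'|^{1/2}\}$, furnished by \eqref{eq:thm:var-1d:coll:b}; (ii) the exact Khintchine identity $\sum_K\|Th_K\|_{L^2}^2 = \cond_\theta\|Tb^{(\theta)}\|_{L^2}^2$; and (iii) a norm-one embedding of $W_N$ (or, after passing to $T^*$, of $W_N^*$) into $L^2$, combined with Lemma~\ref{lem:block:basic-estimate-1d}. This is shorter, treats all three families essentially uniformly, and even improves the exponent from $\alpha^{1/2}$ to $\alpha$ (which only sharpens the Chebyshev step later). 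Your duality device $\|v\|_{(H^1)^*}\ge\|v\|_{L^2}^2/\|v\|_{H^1}\ge\|v\|_{L^2}$ in fact works verbatim for $(H^p)^*$ for every $p\le 2$, so you do not need to invoke the (merely isomorphic, not isometric) identification $(H^p)^*\cong H^{p'}$; you may want to state this explicitly to cover $1<p<2$ as cleanly as $p=1$. The only imprecision in your write-up is the phrase "arrive at the favorable case $V=W_N$", which should read "arrive at the favorable case where $V\in\{W_N,W_N^*\}$ embeds into $L^2$ with constant one"; the substance is fine, and the pass to the adjoint cleanly swaps the roles of $\mathcal{B}_I$ and $\mathcal{B}_{I'}$ in the Bessel/Khintchine chain, as required.
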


Before we proceed to the proof of Theorem~\ref{thm:var-1d}, we record the following elementary facts.
\begin{lem}\label{lem:block:basic-estimate-1d}
  Let $\mathcal{B}$ be a non-empty, finite collection of pairwise disjoint dyadic intervals, and
  define
  \begin{equation}\label{lem:block:basic-estimate-1d:func}
    b^{(\theta)}
    = \sum_{K\in\mathcal{B}} \theta_K h_K,
    \qquad \theta \in \{\pm 1\}^{\mathcal{D}}.
  \end{equation}
  Then for all $1\leq p < \infty$, $1< p' \leq \infty$ with $\frac{1}{p}+\frac{1}{{p'}} = 1$, we
  have
  \begin{equation}
    \label{eq:lem:block:basic-estimate-1d}
    \|b^{(\theta)}\|_{H^p} = \Big|\bigcup\mathcal{B}\Big|^{1/p},
    \qquad
    \|b^{(\theta)}\|_{(H^p)^*} = \Big|\bigcup\mathcal{B}\Big|^{1/{p'}}
    \qquad\text{and}\qquad
    \|b^{(\theta)}\|_{SL^\infty} = 1.
  \end{equation}
\end{lem}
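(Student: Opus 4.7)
The plan is to reduce all three norm computations to a single explicit pointwise identity for the square function of $b^{(\theta)}$, and then use standard duality for $(H^p)^*$. Since $h_K^2 = \chi_K$ by \eqref{eq:haar-system}, and the intervals in $\mathcal{B}$ are pairwise disjoint, I would first observe the pointwise identity
\begin{equation*}
  S(b^{(\theta)})^2
  = \sum_{K\in\mathcal{B}} \theta_K^2\, h_K^2
  = \sum_{K\in\mathcal{B}} \chi_K
  = \chi_{\bigcup\mathcal{B}}.
\end{equation*}
Substituting $S(b^{(\theta)}) = \chi_{\bigcup\mathcal{B}}$ into \eqref{eq:Hp-norm} gives $\|b^{(\theta)}\|_{H^p}^p = |\bigcup\mathcal{B}|$ at once, and the same identity substituted into \eqref{eq:sl-infty:norm} yields $\|b^{(\theta)}\|_{SL^\infty} = \|\chi_{\bigcup\mathcal{B}}\|_{L^\infty} = 1$, using that $\mathcal{B}$ is non-empty.

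For the $(H^p)^*$-norm, which is the operator norm of the functional $g\mapsto \langle b^{(\theta)}, g\rangle$ on $H^p$ under the pairing \eqref{eq:scalar-product}, I would first reduce to test functions whose Haar support is contained in $\mathcal{B}$. Indeed, by orthogonality of the Haar system any Haar coefficients of $g$ at intervals outside $\mathcal{B}$ do not contribute to $\langle b^{(\theta)}, g\rangle$, while the corresponding terms only enlarge $S(g)^2$ pointwise, hence $\|g\|_{H^p}$; the supremum is therefore attained on the class $g = \sum_{K\in\mathcal{B}} c_K h_K$. For such $g$, disjointness of $\mathcal{B}$ gives the two formulas
\begin{equation*}
  \langle b^{(\theta)}, g\rangle = \sum_{K\in\mathcal{B}} \theta_K c_K |K|
  \qquad\text{and}\qquad
  \|g\|_{H^p}^p = \sum_{K\in\mathcal{B}} |c_K|^p |K|,
\end{equation*}
and Hölder's inequality with exponents $p,p'$ applied to $\sum_K |c_K|\,|K|^{1/p}\cdot |K|^{1/p'}$ yields the upper bound $|\bigcup\mathcal{B}|^{1/p'}$. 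The matching lower bound is realised by choosing $|c_K|$ constant in $K$ and $\theta_K c_K\ge 0$.

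Since the whole argument consists of a one-line identity for the square function plus a standard Hölder estimate, I do not anticipate a genuine obstacle. The only case requiring a word of care is $p=1$, where one reads $1/p' = 0$ so that the formula specialises to $\|b^{(\theta)}\|_{(H^1)^*} = 1$, consistent with the $SL^\infty$-computation above.
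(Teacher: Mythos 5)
The paper omits this proof with the remark that it is ``simple and straightforward,'' so there is no official argument to compare against; your proposal is the natural elementary route the author evidently had in mind, and it is correct. The pointwise identity $S(b^{(\theta)})^2 = \chi_{\bigcup\mathcal{B}}$ (using $\theta_K^2=1$, $h_K^2=\chi_K$ and disjointness) immediately settles the $H^p$- and $SL^\infty$-norms, and for $(H^p)^*$ the reduction to $g=\sum_{K\in\mathcal{B}}c_K h_K$ by orthogonality plus monotonicity of the square function, followed by H\"older on $\sum_K |c_K|\,|K|^{1/p}\cdot|K|^{1/p'}$ with extremizer $c_K=\theta_K$, is exactly the standard duality computation; the $p=1$ case degenerates gracefully to $\|b^{(\theta)}\|_{(H^1)^*}=1$ as you observe.
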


\begin{proof}[Proof of Lemma~\ref{lem:block:basic-estimate-1d}]
  The proof is simple and straightforward, and therefore omitted.
\end{proof}

\begin{proof}[Proof of Theorem~\ref{thm:var-1d}]
  Clearly, $\cond Y_{I,I'} = \cond Z_I = 0$.

  Note that for $K_0,K_1,K_0',K_1'\in\mathcal{D}$, we have
  $\cond \theta_{K_0}\theta_{K_1}\theta_{K_0'}\theta_{K_1'} = 1$ if and only if one of the following
  conditions is satisfied:
  \begin{enumerate}[(K1)]
  \item\label{enu:proof:lem:var:k1} $K_0 = K_1 = K_0' = K_1'$;
  \item\label{enu:proof:lem:var:k2} $K_0 = K_1\neq K_0' = K_1'$;
  \item\label{enu:proof:lem:var:k3} $K_0 = K_0'\neq K_1 = K_1'$;
  \item\label{enu:proof:lem:var:k4} $K_0 = K_1'\neq K_1 = K_0'$.
  \end{enumerate}

  \begin{proofcase}[Estimates for $Y_{I,I'}$, if $W_N = H_N^p$]
    Note that
    \begin{equation}\label{eq:proof:lem:var:Y-1d}
      \cond Y_{I,I'}^2(\theta)
      = \sum_{\substack{K_0,K_1\in\mathcal{B}_I\\K_0',K_1'\in\mathcal{B}_{I'}}}
      \cond \theta_{K_0}\theta_{K_1}\theta_{K_0'}\theta_{K_1'}
      \langle T h_{K_0}, h_{K_0'}\rangle
      \langle T h_{K_1}, h_{K_1'}\rangle.
    \end{equation}
    In view of~\eqref{eq:thm:var-1d:coll}
    and~\textrefp[K]{enu:proof:lem:var:k1}--\textrefp[K]{enu:proof:lem:var:k4}, the
    cases~\textrefp[K]{enu:proof:lem:var:k1}, \textrefp[K]{enu:proof:lem:var:k3},
    \textrefp[K]{enu:proof:lem:var:k4} are eliminated from the sum in~\eqref{eq:proof:lem:var:Y-1d}.
    Thus, with only~\textrefp[K]{enu:proof:lem:var:k2} terms left, \eqref{eq:proof:lem:var:Y-1d}
    reads as follows:
    \begin{equation}\label{eq:proof:lem:var:Y-1d:cond}
      \cond Y_{I,I'}^2
      = \sum_{\substack{K_0\in\mathcal{B}_I\\K_0'\in\mathcal{B}_{I'}}}
      \langle T h_{K_0}, h_{K_0'}\rangle^2.
    \end{equation}
    Put $a_{K_0,K_0'} = \langle T h_{K_0}, h_{K_0'}\rangle$ and note that
    \begin{equation}
      \label{eq:proof:lem:var:Y-1d:cond:1}
      |a_{K_0,K_0'}|
      \leq \|T\| |K_0|^{1/p} |K_0'|^{1/{p'}}.
    \end{equation}
    We will now estimate~\eqref{eq:proof:lem:var:Y-1d:cond} in two different ways.

    Firstly, we rewrite~\eqref{eq:proof:lem:var:Y-1d:cond} and then use duality to obtain
    \begin{align*}
      \cond Y_{I,I'}^2
      &= \sum_{K_0\in\mathcal{B}_I}
        \Bigl\langle T h_{K_0}, \sum_{K_0'\in\mathcal{B}_{I'}} a_{K_0,K_0'} h_{K_0'}\Bigr\rangle\\
      &\leq \sum_{K_0\in\mathcal{B}_I}
        \|T\| |K_0|^{1/p} \Bigl\|\sum_{K_0'\in\mathcal{B}_{I'}} a_{K_0,K_0'} h_{K_0'}\Bigr\|_{(H^p)^*}.
    \end{align*}
    \eqref{eq:thm:var-1d:coll}, Lemma~\ref{lem:block:basic-estimate-1d}
    and~\eqref{eq:proof:lem:var:Y-1d:cond:1} give us
    \begin{align*}
      \cond Y_{I,I'}^2
      &\leq \sum_{K_0\in\mathcal{B}_I} \|T\| |K_0|^{1/p}
        \max_{K_0'\in\mathcal{B}_{I'}} |a_{K_0,K_0'}|
        \Bigl\|\sum_{K_0'\in\mathcal{B}_{I'}} h_{K_0'}\Bigr\|\\
      & \leq \sum_{K_0\in\mathcal{B}_I} \|T\|^2 |K_0|^{2/p}
        \max_{K_0'\in\mathcal{B}_{I'}} |K_0'|^{1/{p'}}.
    \end{align*}
    Applying Hölder's inequality yields
    \begin{equation*}
      \cond Y_{I,I'}^2
      \leq \|T\|^2 \max_{\substack{K_0\in\mathcal{B}_I\\K_0'\in\mathcal{B}_{I'}}}
      |K_0|^{2/p-1}|K_0'|^{1/{p'}}.
    \end{equation*}
    Using~\eqref{eq:alpha-small-1d} gives us the estimate
    \begin{equation}\label{eq:proof:lem:var:Y-1d:est:1}
      \cond Y_{I,I'}^2 \leq \|T\|^2\alpha^{1/p}.
    \end{equation}

    Secondly, we rewrite~\eqref{eq:proof:lem:var:Y-1d:cond} as follows:
    \begin{equation*}
      \cond Y_{I,I'}^2
      = \sum_{K_0'\in\mathcal{B}_{I'}}
      \Bigl\langle T \sum_{K_0\in\mathcal{B}_{I}} a_{K_0,K_0'} h_{K_0}, h_{K_0'}\Bigr\rangle.
    \end{equation*}
    The analogous computation to the one above shows
    \begin{equation}\label{eq:proof:lem:var:Y-1d:est:2}
      \cond Y_{I,I'}^2
      \leq \|T\|^2\alpha^{1/{p'}}.
    \end{equation}

    Finally, combining~\eqref{eq:proof:lem:var:Y-1d:est:1} and~\eqref{eq:proof:lem:var:Y-1d:est:2}
    yields:
    \begin{equation}\label{eq:proof:lem:var:Y-1d:est:final}
      \cond Y_{I,I'}^2
      \leq \|T\|^2\alpha^{1/2}.
    \end{equation}
  \end{proofcase}

  \begin{proofcase}[Estimates for $Z_I$, if $W_N = H_N^p$]
    In the following sums, the variables $K_0,K_0',K_1,K_1'$ will always be summed over the
    collection $\mathcal{B}_I$.  Note that
    \begin{equation}\label{eq:proof:lem:var:Z-1d}
      \cond Z_I^2(\theta)
      = \sum_{\substack{K_0\neq K_0'\\K_1\neq K_1'}}
      \cond \theta_{K_0}\theta_{K_1}\theta_{K_0'}\theta_{K_1'}
      \langle T h_{K_0}, h_{K_0'}\rangle
      \langle T h_{K_1}, h_{K_1'}\rangle.
    \end{equation}
    In view of~\eqref{eq:thm:var-1d:coll}
    and~\textrefp[K]{enu:proof:lem:var:k1}--\textrefp[K]{enu:proof:lem:var:k4}, the
    cases~\textrefp[K]{enu:proof:lem:var:k1} and~\textrefp[K]{enu:proof:lem:var:k3}, are eliminated
    from the sum in~\eqref{eq:proof:lem:var:Z-1d}.

    If we restrict the sum in~\eqref{eq:proof:lem:var:Z-1d} to
    Case~{\textrefp[K]{enu:proof:lem:var:k2}}, \eqref{eq:proof:lem:var:Z-1d} reads
    \begin{equation}\label{eq:proof:lem:var:Z-1d:k2:cond}
      \cond Z_I^2(\theta)
      = \sum_{K_0\neq K_0'}
      \langle T h_{K_0}, h_{K_0'}\rangle^2.
    \end{equation}
    Note that the expressions~\eqref{eq:proof:lem:var:Y-1d:cond}
    and~\eqref{eq:proof:lem:var:Z-1d:k2:cond} are algebraically the same, except for the conditions
    $I\neq I'$ in~\eqref{eq:proof:lem:var:Y-1d:cond} and $I=I'$
    in~\eqref{eq:proof:lem:var:Z-1d:k2:cond}.  Hence, we can repeat the proof for $Y_{I,I'}$, which
    yields
    \begin{equation}\label{eq:proof:lem:var:Y-1d:est:k2:final}
      \cond Z_I^2
      \leq \|T\|^2\alpha^{1/2}.
    \end{equation}

    Restricting the sum in~\eqref{eq:proof:lem:var:Z-1d} to
    Case~{\textrefp[K]{enu:proof:lem:var:k4}} gives us
    \begin{equation}\label{eq:proof:lem:var:Z-1d:k4:cond}
      \cond Z_I^2(\theta)
      = \sum_{K_0\neq K_1}
      \langle T h_{K_0}, h_{K_1}\rangle
      \langle T h_{K_1}, h_{K_0}\rangle.
    \end{equation}
    Put $a_{K_0,K_1} = \langle T h_{K_0}, h_{K_1}\rangle$ and note that
    \begin{equation}
      \label{eq:proof:lem:var:Z-1d:k4:cond:1}
      |a_{K_0,K_1}|
      \leq \|T\| |K_0|^{1/p} |K_1|^{1/{p'}}.
    \end{equation}

    We will now estimate~\eqref{eq:proof:lem:var:Z-1d:k4:cond} in two different ways.  Firstly,
    rewriting~\eqref{eq:proof:lem:var:Z-1d:k4:cond} and then using duality yields
    \begin{align*}
      \cond Z_I^2
      &= \sum_{K_0} \Bigl\langle T \sum_{K_1} a_{K_0,K_1} h_{K_1}, h_{K_0}\Bigr\rangle
        \leq \sum_{K_0} \|T\| \Bigl\| \sum_{K_1} a_{K_0,K_1} h_{K_1}\Bigr\|_{H^p} |K_0|^{1/{p'}}.
    \end{align*}
    \eqref{eq:thm:var-1d:coll}, Lemma~\ref{lem:block:basic-estimate-1d}
    and~\eqref{eq:proof:lem:var:Z-1d:k4:cond:1} give us
    \begin{align*}
      \cond Z_I^2
      &\leq \sum_{K_0} \|T\| \max_{K_1} |a_{K_0,K_1}| \Bigl\| \sum_{K_1} h_{K_1}\Bigr\|_{H^p}
        |K_0|^{1/{p'}}\\
      &\leq \sum_{K_0} \|T\|^2 |K_0| \max_{K_1} |K_1|^{1/p'}
        = \|T\|^2 \max_{K_1} |K_1|^{1/p'}.
    \end{align*}
    Using~\eqref{eq:alpha-small-1d}, we obtain the estimate
    \begin{equation}\label{eq:proof:lem:var:Z-1d:k4:est:1}
      \cond Z_I^2 \leq \|T\|^2\alpha^{1/p'}.
    \end{equation}

    Secondly, we rewrite~\eqref{eq:proof:lem:var:Z-1d:k4:cond} as follows:
    \begin{equation*}
      \cond Z_I^2
      = \sum_{K_1} \Bigl\langle T h_{K_1}, \sum_{K_0} a_{K_0,K_1} h_{K_0}\Bigr\rangle.
    \end{equation*}
    The analogous computation to the one above shows
    \begin{equation}\label{eq:proof:lem:var:Z-1d:k4:est:2}
      \cond Z_I^2
      \leq \|T\|^2\alpha^{1/{p}}.
    \end{equation}

    Finally, combining~\eqref{eq:proof:lem:var:Z-1d:k4:est:1}
    with~\eqref{eq:proof:lem:var:Z-1d:k4:est:2} gives us
    \begin{equation}\label{eq:proof:lem:var:Z-1d:k4:est:final}
      \cond Z_I^2
      \leq \|T\|^2\alpha^{1/2}.
    \end{equation}
    in Case~{\textrefp[K]{enu:proof:lem:var:k4}}.

    Adding~\eqref{eq:proof:lem:var:Y-1d:est:k2:final} and~\eqref{eq:proof:lem:var:Z-1d:k4:est:final}
    yields
    \begin{equation}\label{eq:proof:lem:var:Z-1d:est:final}
      \cond Z_I^2
      \leq 2\|T\|^2\alpha^{1/2}.
      \qedhere
    \end{equation}
  \end{proofcase}

  \begin{proofcase}[Estimates for $W_N = (H_N^p)^*$ and $W_N = SL_N^\infty$]
    If $W_N = (H_N^p)^*$, we repeat the above proof, but with the roles of $H_N^p$ and $(H_N^p)^*$
    reversed.

    If $W_N = SL_N^\infty$, we only need to repeat half of the above proof (only the parts where the
    inner sum is on the $SL^\infty$ side of the duality pairing).  To be more precise, we repeat the
    proof for estimate~\eqref{eq:proof:lem:var:Y-1d:est:2} for $Y_{I,I'}$, and the proof for the
    estimates~\eqref{eq:proof:lem:var:Y-1d:est:k2:final} (which is actually repeating the proof for
    $Y_{I,I'}$, again) and~\eqref{eq:proof:lem:var:Z-1d:k4:est:1} for $Z_I$.  This way, we obtain
    the estimates
    \begin{equation}\label{eq:proof:lem:var:Z-1d:est:final:SL}
      \cond Y_{I,I'}^2
      \leq \|T\|^2\alpha
      \qquad\text{and}\qquad
      \cond Z_I^2
      \leq 2\|T\|^2\alpha.
      \qedhere
    \end{equation}
  \end{proofcase}
\end{proof}

\section{Embeddings, projections and factorization}\label{sec:factor-1d}

\noindent
First, we record essential facts about embeddings and projections in $H^p$, $(H^p)^*$,
$1 \leq p < \infty$ and $SL^\infty$, and then we prove the main result Theorem~\ref{thm:results:factor-1d}.

\subsection{Jones' compatibility condition}\label{sec:jones}

Given $\mathcal{B}_I\subset\mathcal{D}$, $I\in\mathcal{D}$, we put $B_I = \bigcup \mathcal{B}_I$.
We say that the collections $\mathcal{B}_I$, $I\in\mathcal{D}$ satisfy Jones' compatibility
condition~(\hyperref[enu:c1]{C}) (see~\cite{jones:1985}; see also~\cite{mueller:2005}) with
constant $\kappa\geq 1$, if the following four conditions are satisfied:
\begin{enumerate}[(C1)]
\item\label{enu:c1} For each $I\in\mathcal{D}$, the collection $\mathcal{B}_I$ consists of finitely
  many pairwise disjoint dyadic intervals; moreover,
  $\mathcal{B}_I\cap \mathcal{B}_{I'} = \emptyset$, whenever $I,I'\in\mathcal{D}$, $I\neq I'$.

\item\label{enu:c2} For every $I\in\mathcal{D}$, we have that $B_{I^-}\cup B_{I^+}\subset B_I$ and
  $B_{I^-}\cap B_{I^+} = \emptyset$.

\item\label{enu:c3} $\kappa^{-1} |I| \leq |B_I| \leq \kappa |I|$, for all $I\in\mathcal{D}$.

\item\label{enu:c4} For all $I_0,I\in \mathcal{D}$ with $I_0\subset I$ and $K\in \mathcal{B}_I$, we
  have $\frac{|K\cap B_{I_0}|}{|K|} \geq \kappa^{-1} \frac{|B_{I_0}|}{|B_I|}$.
\end{enumerate}

\begin{thm}\label{thm:projection-1d}
  Let $\mathcal{B}_I\subset\mathcal{D}$, $I\in\mathcal{D}$ satisfy Jones' compatibility
  condition~(\hyperref[enu:c1]{C}) with constant $\kappa = 1$.  Let
  $\theta\in \{\pm 1\}^{\mathcal{D}}$ and define
  \begin{equation}\label{eq:thm:projection-1d:tensor:1}
    b_I^{(\theta)}
    = \sum_{K\in\mathcal{B}_I} \theta_K h_K,
    \qquad I\in \mathcal{D}.
  \end{equation}
  Given $1 \leq p < \infty$, let $W$ denote either $H^p$, $(H^p)^*$ or $SL^\infty$.  Then the
  operators $B^{(\theta)}, A^{(\theta)} : W\to W$ given by
  \begin{equation}\label{eq:thm:projection-1d:operators}
    B^{(\theta)} f = \sum_{I\in \mathcal{D}} \frac{\langle f, h_I\rangle}{\|h_I\|_2^2} b_I^{(\theta)}
    \qquad\text{and}\qquad
    A^{(\theta)} f = \sum_{I\in \mathcal{D}} \frac{\langle f, b_I^{(\theta)}\rangle}{\|b_I^{(\theta)}\|_2^2} h_I
  \end{equation}
  satisfy the estimates
  \begin{equation}\label{eq:thm:projection-1d:estimates}
    \begin{aligned}
      \|B^{(\theta)} f \|_{W} & \leq \|f\|_{W},
      &&f\in W,\\
      \|A^{(\theta)} f \|_{W} &\leq \|f\|_{W}, &&f\in W.
    \end{aligned}
  \end{equation}
  Moreover, the diagram
  \begin{equation}\label{eq:thm:projection-1d:diagram}
    \vcxymatrix{W \ar[rr]^{\Id_{W}} \ar[rd]_{B^{(\theta)}} && W\\
      &  W  \ar[ru]_{A^{(\theta)}} &
    }
  \end{equation}
  is commutative and the composition $P^{(\theta)} = B^{(\theta)} A^{(\theta)}$ is the norm $1$
  projection $P^{(\theta)} : W\to W$ given by
  \begin{equation}\label{eq:thm:projection-1d:proj-formula}
    P^{(\theta)}(f)
    = \sum_{I\in\mathcal{D}} \frac{\langle f, b_I^{(\theta)}\rangle}{\|b_I\|_2^2}
    b_I^{(\theta)}.
  \end{equation}
  Consequently, the range of $B^{(\theta)}$ is complemented (by $P^{(\theta)}$), and $B^{(\theta)}$
  is an isometric isomorphism onto its range.
\end{thm}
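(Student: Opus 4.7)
My plan is to exploit the $\kappa=1$ case of Jones' compatibility condition in order to identify the $(B_I)_{I\in\mathcal{D}}$ structure with the standard dyadic one via a measure-preserving bijection of $[0,1)$, and to transport the Hardy / $SL^\infty$ square functions across this bijection.  Conditions~(\hyperref[enu:c2]{C2}) and~(\hyperref[enu:c3]{C3}) with $\kappa=1$ force $|B_{I^-}| = |B_{I^+}| = |I|/2$ together with $B_{I^-}\cup B_{I^+} = B_I$ modulo null sets; iterating shows that $\{B_I : I\in\mathcal{D}_n\}$ partitions $[0,1)$ at every dyadic level and the resulting tower of partitions is tree-isomorphic to $\{I : I\in\mathcal{D}_n\}$.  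This yields a measure-preserving bijection $\phi : [0,1)\to[0,1)$, unique modulo null sets, with $\phi^{-1}(I) = B_I$ for every $I\in\mathcal{D}$.

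For $f = \sum_I a_I h_I$, expanding $B^{(\theta)} f$ in the Haar basis gives Haar coefficients equal to $a_I\theta_K$ on $K \in \mathcal{B}_I$ and zero elsewhere.  Combining~(\hyperref[enu:c1]{C1}) with pairwise disjointness of the $K \in \mathcal{B}_I$, the square function collapses to
\begin{equation*}
  \Bigl(\sum_I a_I^2 \chi_{B_I}(x)\Bigr)^{1/2}
  = \Bigl(\sum_I a_I^2 \chi_I(\phi(x))\Bigr)^{1/2},
\end{equation*}
so measure-preservation of $\phi$ yields $\|B^{(\theta)} f\|_{H^p} = \|f\|_{H^p}$ and $\|B^{(\theta)} f\|_{SL^\infty} = \|f\|_{SL^\infty}$ simultaneously.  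An elementary pairing computation using~(\hyperref[enu:c1]{C1}) and $\|b_I^{(\theta)}\|_2^2 = |B_I| = |I|$ shows $\langle b_I^{(\theta)}, b_{I'}^{(\theta)}\rangle = 0$ for $I\neq I'$ and $= |I|$ for $I=I'$, whence $A^{(\theta)} b_J^{(\theta)} = h_J$ and therefore $A^{(\theta)} \circ B^{(\theta)} = \Id_W$.  Moreover, $A^{(\theta)}$ is the formal adjoint of $B^{(\theta)}$ under the integral pairing from~\eqref{eq:scalar-product}, so isometricity of $B^{(\theta)}$ on $H^p$ dualizes to isometricity of $A^{(\theta)}$ on $(H^p)^*$, and vice versa.

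What remains is the contractivity of $A^{(\theta)}$ on $H^p$ and $SL^\infty$ (and dually $B^{(\theta)}$ on $(H^p)^*$).  Since $B^{(\theta)}$ is isometric, $\|A^{(\theta)} f\|_W = \|B^{(\theta)}(A^{(\theta)}f)\|_W = \|P^{(\theta)} f\|_W$, so this reduces to showing that $P^{(\theta)}$ is contractive.  Writing $f = \sum_K c_K h_K$, the square function of $P^{(\theta)} f$ equals $(\sum_I \alpha_I^2 \chi_{B_I})^{1/2}$ where $\alpha_I = |I|^{-1}\sum_{K\in\mathcal{B}_I} c_K\theta_K |K|$ is a $|K|/|I|$-convex combination of the signed coefficients $c_K\theta_K$.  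I would identify this combination as a conditional expectation along the filtration $\mathcal{F}_n = \sigma\bigl(\{B_I : I \in \mathcal{D}_n\}\bigr)$ applied to the sign-flipped function $\tilde f = \sum_K c_K\theta_K h_K$ (which has the same square function as $f$), and then invoke the standard contractivity of dyadic conditional expectation on $H^p$ and $SL^\infty$.  Once $P^{(\theta)}$ is contractive, the identities $(P^{(\theta)})^2 = B^{(\theta)}(A^{(\theta)}B^{(\theta)})A^{(\theta)} = P^{(\theta)}$ and $P^{(\theta)} b_J^{(\theta)} = b_J^{(\theta)}$ force $P^{(\theta)}$ to be the norm-one projection onto $\overline{\spn\{b_I^{(\theta)}\}}$, the diagram~\eqref{eq:thm:projection-1d:diagram} commutes by $A^{(\theta)}\circ B^{(\theta)} = \Id_W$, and $B^{(\theta)}$ is an isometric isomorphism onto its complemented range.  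The main obstacle I expect is making the conditional-expectation identification rigorous, since the $\alpha_I$'s mix Haar coefficients across scales and signs; the cleanest path is to bound $(\sum_I \alpha_I^2 \chi_{B_I})^{1/2}$ pointwise by the martingale square function of $\tilde f$ via Cauchy--Schwarz and then apply the martingale contraction.
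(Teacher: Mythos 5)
The paper does not prove Theorem~\ref{thm:projection-1d} at all; it cites it from the literature -- Jones~\cite{jones:1985} for $W=H^1,(H^1)^*$, Johnson--Maurey--Schechtman--Tzafriri~\cite[Prop.~9.6]{jmst:1979} for $W=H^p$, $1<p<\infty$, and \cite{lechner:2016:factor-SL} for $W=SL^\infty$ -- so you are not being compared against a proof in the paper but against those references.

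Your opening moves are sound: with $\kappa=1$, conditions~\textrefp[C]{enu:c2}--\textrefp[C]{enu:c3} force $B_{I^+}\cup B_{I^-}=B_I$ up to null sets with $|B_I|=|I|$; the square function of $B^{(\theta)}f$ at $x$ equals the square function of $f$ at $\phi(x)$ for the induced measure-preserving map $\phi$ (it need not be a bijection, only measure-preserving, but that is all you use); this gives $B^{(\theta)}$ isometric on $H^p$ and on $SL^\infty$; orthogonality of the $b_I^{(\theta)}$ together with $\|b_I^{(\theta)}\|_2^2=|I|=\|h_I\|_2^2$ gives $A^{(\theta)}B^{(\theta)}=\Id$ and exhibits $A^{(\theta)}$ as the adjoint of $B^{(\theta)}$; and the reduction of the contractivity of $A^{(\theta)}$ to that of $P^{(\theta)}$ via $\|A^{(\theta)}f\|=\|B^{(\theta)}A^{(\theta)}f\|$ is correct.

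The gap is in the last step. First, $P^{(\theta)}$ is \emph{not} a conditional expectation: its range is $\overline{\spn}\{b_I^{(\theta)}\}$, whose elements take the values $\pm\alpha_I$ on the halves of each $K\in\mathcal{B}_I$ rather than being constant on the blocks of any partition, so the identification you propose is not available. Second, the pointwise bound you hope for is false; what Jensen (using the convexity of $t\mapsto t^2$ and the weights $|K|/|I|$) actually gives is
\begin{equation*}
  \sum_{I}\alpha_I^2\chi_{B_I}
  \le \sum_{I}\sum_{K\in\mathcal{B}_I}\frac{|K|}{|I|}\,c_K^2\,\chi_{B_I}
  = \cond\Bigl[\sum_K c_K^2\chi_K \,\Big|\, \mathcal{F}_\infty\Bigr],
\end{equation*}
where $\mathcal{F}_\infty=\sigma\bigl(\bigcup_n\{B_I : I\in\mathcal{D}_n\}\bigr)$, and the equality above uses condition~\textrefp[C]{enu:c4} with $\kappa=1$ (which forces $|K\cap B_{I_0}|=|K|\,|B_{I_0}|/|B_I|$, so that $\cond[\chi_K\mid\mathcal{F}_n]=\frac{|K|}{|I|}\chi_{B_I}$ stabilizes for $n\ge\lev(I)$). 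You never invoke~\textrefp[C]{enu:c4}, and without it this identity fails. Third, and most importantly, even granting the identity, passing from a pointwise bound on $S(P^{(\theta)}f)^2$ by $\cond[S(f)^2\mid\mathcal{F}_\infty]$ to an $L^p$-bound on $S(P^{(\theta)}f)$ requires the conditional expectation to be contractive on $L^{p/2}$. This is true for $p\ge 2$ (Jensen with the convex function $t\mapsto t^{p/2}$) and for $SL^\infty$ (contractivity on $L^\infty$), so your argument does cover those cases. But for $1\le p<2$ the function $t\mapsto t^{p/2}$ is concave and conditional expectation is not bounded on the quasi-Banach space $L^{p/2}$; this is precisely the failure mode that makes $H^1$ hard, and it is why the paper's Remark flags that the JMST approach (via Stein's inequality) only reaches $1<p<\infty$ and that Jones' proof for $H^1$ and $(H^1)^*$ is a genuinely different argument resting crucially on~\textrefp[C]{enu:c4} and the atomic decomposition of $H^1$. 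Your proposal therefore does not close the $1\le p<2$ cases, which are the ones the theorem is really about; you would need to import the Jones atomic-decomposition argument (or prove an analogue) rather than a generic martingale contraction.
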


\begin{rem}
  In~\cite{gamlen:gaudet:1973}, Gamlen and Gaudet showed a similar version of
  Theorem~\ref{thm:projection-1d} for $W=L^p$, $1 < p < \infty$.  Let us point out two major aspects of
  their method: Firstly, they are using functions $(d_i)_{i=1}^\infty$, which are not adapted to any
  dyadic filtration, therefore, their method is not applicable in $H^p$, $1 < p < \infty$.
  Secondly, condition~\textrefp[C]{enu:c4} is not part of their hypothesis.  Instead, the
  collections $\mathcal{B}_I$, $I\in\mathcal{D}$ and the sets $\{b_I^{(\theta)} = \pm 1\}$,
  $I\in\mathcal{D}$ are linked, so that their projection $P$ can be viewed as a conditional
  expectation.  Hence, $P$ is bounded in $L^1$ and their result can be extended to $L^1$.

  In~\cite[Proposition 9.6]{jmst:1979}, Johnson, Maurey, Schechtman and Tzafriri specify conditions
  for a block basis of the Haar system, so that the conclusion of Theorem~\ref{thm:projection-1d} is true
  for $W=H^p$, $1 < p < \infty$.  Since the proof relies on Stein's martingale inequality, their
  result does not extend to $W=H^1$ or $W=(H^1)^*$.  If Jones' compatibility
  condition~(\hyperref[enu:c1]{C}) is satisfied, the operator $B^{(\theta)}$ and the projection
  $P^{(\theta)}$ in Theorem~\ref{thm:projection-1d} are the same as the respective operators occurring
  in~\cite[Proposition 9.6]{jmst:1979}.

  In~\cite{jones:1985}, Jones showed Theorem~\ref{thm:projection-1d} for $W=H^1$ and $W=(H^1)^*$.  In order
  to achieve this, it was crucial to have condition~\textrefp[C]{enu:c4} in place.

  The case $W=SL^\infty$ is proved in~\cite{lechner:2016:factor-SL}, even without
  requiring~\textrefp[C]{enu:c3}.
\end{rem}

\subsection{Proof of the main result Theorem~\ref{thm:results:factor-1d}}\label{sec:factor-1d-proof}

For convenience, we repeat Theorem~\ref{thm:results:factor-1d} here.
\begin{thm}[Main result Theorem~\ref{thm:results:factor-1d}]\label{thm:factor-1d}
  Let $1 \leq p < \infty$, and let $(W_k : k\in\mathbb{N})$ denote one of the following three
  sequences of spaces:
  \begin{align}\label{eq:thm:factor-1d:spaces}
    &(H_k^p : k\in\mathbb{N}), &&((H_k^p)^* : k\in\mathbb{N}),
    &&(SL_k^\infty : k\in\mathbb{N}).
  \end{align}
  Let $n \in \mathbb N$ and $\delta,\Gamma,\eta > 0$.  Define the integer
  $N=N(n,\delta,\Gamma,\eta)$ by the formula
  \begin{equation}\label{eq:thm:factor-1d:dim}
    N
    = 19(n+2) + \bigl\lfloor
    4\log_2(\Gamma/\delta) + 4\log_2\bigl(1+\eta^{-1}\bigr)
    \bigr\rfloor
    .
  \end{equation}
  Then for any operator $T : W_N\rightarrow W_N$ satisfying
  \begin{equation}\label{eq:thm:factor-1d:large}
    \|T\|\leq \Gamma
    \qquad\text{and}\qquad
    |\langle T h_K, h_K \rangle|
    \geq \delta |K|,
    \quad K\in \mathcal{D}_{\leq N},
  \end{equation}
  there exist bounded linear operators $E : W_n\to W_N$ and $F : W_N\to W_n$, such that the diagram
  \begin{equation}\label{eq:thm:factor-1d:diag}
    \vcxymatrix{W_n \ar[rr]^{\Id_{W_n}} \ar[d]_E && W_n\\
      W_N \ar[rr]_T && W_N \ar[u]_F}
    \qquad \|E\|\|F\| \leq \frac{1+\eta}{\delta}
  \end{equation}
  is commutative.
\end{thm}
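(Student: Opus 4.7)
The plan is to construct $E$ and $F$ from a Jones-compatible random Haar block basis, using the machinery of Sections~\ref{sec:random}--\ref{sec:jones}. As a preliminary reduction, composing $T$ on the right with the isometric Haar multiplier $M_\sigma h_K = \sign(\langle T h_K, h_K\rangle)\, h_K$, we may assume without loss of generality that $\langle T h_K, h_K\rangle \ge \delta |K|$ for every $K \in \mathcal{D}_{\le N}$, since a factorization of $\Id_{W_n}$ through $T \circ M_\sigma$ lifts to one through $T$ at the same cost. Next, fix a scale parameter $m$ and select Jones-compatible collections $\mathcal{B}_I \subset \mathcal{D}_{\le N}$, $I \in \mathcal{D}_{\le n}$, with constant $\kappa = 1$ and $\alpha := \max\{|K| : K \in \bigcup_I \mathcal{B}_I\} \le 2^{-m}$; the standard Jones construction (cf.~\cite{jones:1985,lechner:2016:factor-SL}) realizes such a family within $\mathcal{D}_{\le N}$ whenever $N$ dominates the Jones depth plus $m$, which is the source of the summand $19(n+2)$ in~\eqref{eq:thm:factor-1d:dim}.

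Applying Theorem~\ref{thm:var-1d} to $b_I^{(\theta)} = \sum_{K \in \mathcal{B}_I} \theta_K h_K$ furnishes the variance bounds $\cond Y_{I,I'}^2 \le \Gamma^2 \alpha^{1/2}$ and $\cond Z_I^2 \le 2 \Gamma^2 \alpha^{1/2}$ for $I \ne I'$ in $\mathcal{D}_{\le n}$. By Chebyshev's inequality and a union bound over the $|\mathcal{D}_{\le n}|^2 \le 4^{n+1}$ pairs, for $m$ taken linear in $n$ with an additional summand of order $\log_2(\Gamma/\delta) + \log_2(1+\eta^{-1})$, there exists a sign pattern $\theta_*$ such that $|Y_{I,I'}(\theta_*)|$ and $|Z_I(\theta_*)|$ are uniformly small compared to $\delta\eta|B_I|/(1+\eta)$. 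Fix such $\theta_*$, write $b_I := b_I^{(\theta_*)}$, and let $B = B^{(\theta_*)}$, $A = A^{(\theta_*)}$ be the contractions of Theorem~\ref{thm:projection-1d} satisfying $AB = \Id_{W_n}$. Set $E := B$ and $F := \tilde T^{-1} A$, where $\tilde T := A T B : W_n \to W_n$; then $F T E = \tilde T^{-1} \tilde T = \Id_{W_n}$ and $\|E\|\|F\| \le \|\tilde T^{-1}\|$. In the Haar basis, $\tilde T h_I = \sum_{I'} (\langle T b_I, b_{I'}\rangle/|B_{I'}|)\, h_{I'}$, with diagonal entries $\beta_I := \langle T b_I, b_I\rangle/|B_I|$ satisfying $\beta_I \ge \delta(1 - O(\eta))$ by the sign reduction and the $|Z_I|$ bound. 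Decomposing $\tilde T = D + R$, where $D$ is the Haar multiplier of symbol $(\beta_I)$, the $1$-unconditionality of the Haar system in $W_n$ gives $\|D^{-1}\|_{W_n \to W_n} = 1/\min_I |\beta_I|$, and a Neumann-series argument then yields $\|\tilde T^{-1}\|_{W_n \to W_n} \le (1+\eta)/\delta$ provided $\|R\|_{W_n \to W_n}$ is a sufficiently small fraction of $\delta\eta/(1+\eta)$.

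The main obstacle is exactly this last operator-norm upgrade: Theorem~\ref{thm:var-1d} delivers only pointwise variance control on the matrix entries of $R$, while the conclusion requires a bound on the operator norm of $R$ in each of the three Haar-adapted spaces $H_n^p$, $(H_n^p)^*$, $SL_n^\infty$. For $H_n^p$ and its dual, the upgrade runs through duality and H\"older's inequality, mirroring the two dual rewritings of $Y_{I,I'}^2$ already used in the proof of Theorem~\ref{thm:var-1d}; for $SL_n^\infty$ it exploits the ideal property of the square-function norm. Each of these produces a combinatorial loss of at most $2^{O(n)}$, which is exactly absorbed by $\alpha^{1/2} = 2^{-m/2}$ once $m$ is chosen linear in $n$---precisely the compensation encoded in the constants $19$ and $4$ of formula~\eqref{eq:thm:factor-1d:dim}.
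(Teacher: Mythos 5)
Your proposal tracks the paper's proof closely: sign reduction via the Haar multiplier, the minimalist Gamlen--Gaudet construction $\mathcal{B}_{[0,1)} = \mathcal{D}_{m_0}$, $\mathcal{B}_{I^\pm} = \{K^\pm : K\in\mathcal{B}_I\}$ (which automatically has $\kappa=1$), Chebyshev plus a union bound to select $\theta_*$ via Theorem~\ref{thm:var-1d}, and then Theorem~\ref{thm:projection-1d} to set up the factorization.  Working with $\tilde T = A T B$ on $W_n$ rather than with $U^{(\theta)}TI$ on $Y = P^{(\theta)}(W_N)$ as the paper does is only a cosmetic change: the two are conjugate via the isometric isomorphism $B : W_n\to Y$, so inverting one is equivalent to inverting the other by the same Neumann series.

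The one genuine gap is the step you yourself flag as ``the main obstacle,'' namely the passage from entrywise smallness of the off-diagonal part $R$ of $\tilde T$ to an operator-norm bound.  You propose a space-specific route (duality and H\"older for $H_n^p$, $(H_n^p)^*$, an ``ideal property'' for $SL_n^\infty$) modelled on the two dual rewritings inside the proof of Theorem~\ref{thm:var-1d}, but you do not carry it out, and it is not clear how those rewritings, which estimate scalar variances, would produce the required operator-norm bound.  In fact no such refinement is needed and this step is a non-obstacle.  The paper uses only the triangle inequality: with $|a_I|\le\|f\|_{W_n}/\|h_I\|_{W_n}$ (from $1$-unconditionality of the Haar system), $|\langle Tb_I,b_{I'}\rangle|\le\eta_0$, and $\|h_{I'}\|_{W_n}/\bigl(|I'|\,\|h_I\|_{W_n}\bigr)\le 2^n$ in each of the three scales, one gets $\|R\|_{W_n\to W_n}\le \eta_0\, 2^{3(n+1)}$ by summing over the at most $2^{2(n+1)}$ pairs $(I,I')$.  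The point you are missing is that the $2^{O(n)}$ loss incurred here is \emph{not} absorbed by $\alpha^{1/2}=2^{-m_0/2}$; it is pre-absorbed by defining $\eta_0 = \eta\delta/\bigl((1+\eta)2^{3(n+2)}\bigr)$ \emph{before} the Chebyshev step, while $\alpha^{1/2}$ only enters to make the union bound probability $<1$ given this $\eta_0$.  Separating these two roles, and replacing your duality sketch by the crude counting argument, closes the gap and reproduces the paper's constants in~\eqref{eq:thm:factor-1d:dim}.
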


\begin{myproof}
  Define the norm $1$ multiplication operator $M : W_N\to W_N$ as the linear extension of
  \begin{equation*}
    h_K\mapsto \sign(\langle T h_K, h_K\rangle) h_K,
    \qquad K\in\mathcal{D}_{\leq N},
  \end{equation*}
  and observe that by~\eqref{eq:thm:factor-1d:large}, we obtain
  \begin{equation*}
    \langle T M h_K, h_K \rangle
    = |\langle T h_K, h_K \rangle|
    \geq \delta |K|,
    \qquad K\in\mathcal{D}_{\leq N}.
  \end{equation*}
  Thus, we can assume that
  \begin{equation}
    \label{eq:proof:thm:factor-1d:large}
    \langle T h_K, h_K \rangle\geq \delta|K|,
    \qquad K\in\mathcal{D}_{\leq N}.
  \end{equation}

  Before we proceed, we define the following two constants: Let $m_0\in\mathbb{N}_0$ be the smallest
  integer for which
  \begin{equation}\label{proof:thm:factor-1d:const:1}
    2^{m_0} > \frac{2^{6(n+2)}\Gamma^4}{\eta_0^4},
    \qquad\text{where}\quad
    \eta_0 = \frac{\eta\delta}{(1+\eta)2^{3(n+2)}}.
  \end{equation}
    
  \begin{proofstep}[Step~\theproofstep: Overview]
    The operators $E$ and $F$ will be defined in terms of a block basis $b_I^{(\theta)}$,
    $I\in\mathcal{D}_{\leq n}$ of the Haar system $h_K$, $K\in\mathcal{D}_{\leq N}$ having the
    following form:
    \begin{equation}\label{proof:thm:factor-1d:overview:0}
      b_{I}^{(\theta)}
      = \sum_{K\in\mathcal{B}_I} \theta_K h_K,
      \qquad I\in\mathcal{D}_{\leq n},\ \theta\in\{\pm 1\}^{\mathcal{D}}.
    \end{equation}
    Our goal is to find collections $\mathcal{B}_I\subset\mathcal{D}_{\leq N}$,
    $I\in\mathcal{D}_{\leq n}$ satisfying Jones' compatibility condition~(\hyperref[enu:c1]{C}) with
    constant $\kappa=1$, and signs $\theta\in \{\pm 1\}^{\mathcal{D}}$ such that
    \begin{subequations}\label{proof:thm:factor-1d:overview:1}
      \begin{align}
        |\langle T b_I^{(\theta)},
        b_{I'}^{(\theta)}\rangle|
        & \leq \eta_0,
        & I,I'\in\mathcal{D}_{\leq n},\ I\neq I',&
                                                   \label{proof:thm:factor-1d:overview:1:a}\\
        \langle T b_I^{(\theta)},
        b_I^{(\theta)}\rangle
        & \geq (\delta - 2^{n}\eta_0)\|b_I^{(\theta)}\|_2^2,
        & I\in\mathcal{D}_{\leq n}.&
                                     \label{proof:thm:factor-1d:overview:1:b}
      \end{align}
    \end{subequations}
  \end{proofstep}

  \begin{proofstep}[Step~\theproofstep: constructing the random block basis $b_I^{(\theta)}$,
    $I\in\mathcal{D}_{\leq n}$]\label{proofstep:thm:factor-1d:2}
    First, we will use a minimalist Gamlen-Gaudet construction to define the collections
    $\mathcal{B}_I$, $I\in\mathcal{D}_{\leq n}$, and then we will rely on Theorem~\ref{thm:var-1d} to find
    signs $\theta\in\{\pm 1\}^{\mathcal{D}}$ such that~\eqref{proof:thm:factor-1d:overview:1} is
    satisfied.
    
    We will now inductively define the collections $\mathcal{B}_I$, $I\in\mathcal{D}_{\leq n}$.  We
    begin by putting,
    \begin{equation}\label{eq:proof:thm:factor-1d:coll:1}
      \mathcal{B}_{[0,1)}
      = \mathcal{D}_{m_0}.
    \end{equation}
    Let $0\leq k \leq n-1$ and assume that we have already constructed the collections
    $\mathcal{B}_I$, $I\in\mathcal{D}_{\leq k}$.  Then, we define
    \begin{equation}\label{eq:proof:thm:factor-1d:coll:2}
      \mathcal{B}_{I^+}
      = \{ K^+ : K\in\mathcal{B}_I\}
      \quad\text{and}\quad
      \mathcal{B}_{I^-}
      = \{ K^- : K\in\mathcal{B}_I\},
      \qquad I\in\mathcal{D}_k.
    \end{equation}
    One can easily verify that the collections, $\mathcal{B}_I$, $I\in\mathcal{D}_{\leq n}$ satisfy
    Jones' compatibility condition~(\hyperref[enu:c1]{C}) with constant $\kappa = 1$.

    Next, we will use a probabilistic argument to find $\theta\in\{\pm 1\}^{\mathcal{D}}$ such
    that~\eqref{proof:thm:factor-1d:overview:1} is satisfied.  To this end, let us now define the
    off-diagonal events
    \begin{subequations}\label{eq:dfn:rv-1d:tail:2}
      \begin{equation}\label{eq:dfn:rv-1d:tail:2:a}
        O_{I,I'} = \bigl\{ \theta\in\{\pm 1\}^{\mathcal{D}} :
        |\langle T b_I^{(\theta)}, b_{I'}^{(\theta)}\rangle|
        > \eta_0
        \bigr\},
        \qquad I,I'\in\mathcal{D}_{\leq n},\ I\neq I'
      \end{equation}
      and the diagonal events
      \begin{equation}\label{eq:dfn:rv-1d:tail:2:b}
        D_I = \biggl\{ \theta\in\{\pm 1\}^{\mathcal{D}} :
        \Bigl|\langle T b_{I}^{(\theta)}, b_{I}^{(\theta)}\rangle
        - \sum_{K\in\mathcal{B}_I} \langle Th_{K}, h_{K}\rangle
        \Bigr|
        > \eta_0
        \biggr\},
        \qquad I\in\mathcal{D}_{\leq n}.
      \end{equation}
    \end{subequations}
    By Theorem~\ref{thm:var-1d} and the definition of the random variables $Y_{I,I'},Z_I$
    (see~\eqref{eq:dfn:rv-1d}), we obtain
    \begin{equation}\label{eq:dfn:rv-1d:tail:1}
      \prob (O_{I,I'})
      \leq \frac{\Gamma^2}{2^{m_0/2}\eta_0^2}
      \quad\text{and}\quad
      \prob (D_I)
      \leq \frac{2\Gamma^2}{2^{m_0/2}\eta_0^2},
      \qquad I,I'\in\mathcal{D}_{\leq n},\ I\neq I.
    \end{equation}
    Using~\eqref{eq:dfn:rv-1d:tail:1} and~\eqref{proof:thm:factor-1d:const:1} gives us
    \begin{equation}\label{eq:dfn:rv-1d:tail:4}
      \prob \Biggr(
      \bigcup_{\substack{I,I'\in\mathcal{D}_{\leq n}\\I\neq I'}} O_{I,I'}
      \cup \bigcup_{I\in\mathcal{D}_{\leq n}} D_{I}
      \Biggl)
      \leq \sum_{\substack{I,I'\in\mathcal{D}_{\leq n}\\I\neq I'}}
      \prob(O_{I,I'})
      + \sum_{I\in\mathcal{D}_{\leq n}} \prob(D_{I})
      \leq \frac{2^{3(n+2)}\Gamma^2}{2^{m_0/2}\eta_0^2}
      < 1.
    \end{equation}
    By~\eqref{eq:dfn:rv-1d:tail:4} and the definition of the events $O_{I,I'}$, $D_{I}$
    (see~\eqref{eq:dfn:rv-1d:tail:2}), we can find at least one $\theta\in\{\pm 1\}^{\mathcal{D}}$
    such that
    \begin{subequations}\label{proof:thm:factor-1d:almost-diag}
      \begin{align}
        |\langle T b_I^{(\theta)}, b_{I'}^{(\theta)}\rangle|
        & \leq \eta_0,
        &I,I'\in\mathcal{D}_{\leq n},\ I\neq I',&
                                                  \label{proof:thm:factor-1d:almost-diag:a}\\
        \Bigl|\langle T b_{I}^{(\theta)},
        b_{I}^{(\theta)}\rangle -
        \sum_{K\in\mathcal{B}_I}
        \langle Th_{K}, h_{K}\rangle\Bigr|
        &\leq \eta_0,
        &I\in\mathcal{D}_{\leq n}.&
                                    \label{proof:thm:factor-1d:almost-diag:b}
      \end{align}
    \end{subequations}
    Using~\eqref{eq:proof:thm:factor-1d:large}, \textrefp[C]{enu:c1}, \textrefp[C]{enu:c3} and that
    $\kappa=1$ by~\eqref{eq:proof:thm:factor-1d:coll:2}, we obtain
    \begin{equation*}
      \sum_{K\in\mathcal{B}_I}
      \langle Th_{K}, h_{K}\rangle
      \geq \sum_{K\in\mathcal{B}_I}
      \delta |K|
      = \delta |B_I|
      = \delta |I|,
      \qquad I\in\mathcal{D}_{\leq n}.
    \end{equation*}
    Combining this estimate with~\eqref{proof:thm:factor-1d:almost-diag:b} yields
    \begin{equation}\label{proof:thm:factor-1d:almost-diag:b:final:1}
      \langle T b_I^{(\theta)}, b_I^{(\theta)}\rangle
      \geq \delta |I| - \eta_0,
      \qquad I\in\mathcal{D}_{\leq n}.
    \end{equation}
    By Lemma~\ref{lem:block:basic-estimate-1d}, we have $\|b_I^{(\theta)}\|_2^2 = |I|$, thus,
    estimate~\eqref{proof:thm:factor-1d:almost-diag:b:final:1} implies
    \begin{equation}\label{proof:thm:factor-1d:almost-diag:b:final:2}
      \langle T b_I^{(\theta)}, b_I^{(\theta)}\rangle
      \geq (\delta - \eta_0 2^{n}) \|b_I^{(\theta)}\|_2^2,
      \qquad I\in\mathcal{D}_{\leq n}.
    \end{equation}

    Together, the estimates~\eqref{proof:thm:factor-1d:almost-diag:a}
    and~\eqref{proof:thm:factor-1d:almost-diag:b:final:2} give
    us~\eqref{proof:thm:factor-1d:overview:1}, that is
    \begin{subequations}\label{proof:thm:factor-1d:almost-diag:final}
      \begin{align}
        |\langle T b_I^{(\theta)}, b_{I'}^{(\theta)}\rangle|
        &\leq \eta_0,
        &I,I'\in\mathcal{D}_{\leq n},\ I\neq I',&
                                                  \label{proof:thm:factor-1d:almost-diag:final:a}\\
        \langle T b_I^{(\theta)}, b_I^{(\theta)}\rangle
        &\geq (\delta - 2^{n}\eta_0) \|b_I^{(\theta)}\|_2^2,
        &I\in\mathcal{D}_{\leq n}.&
                                    \label{proof:thm:factor-1d:almost-diag:final:b}
      \end{align}
    \end{subequations}
  \end{proofstep}

  \begin{proofstep}[Step~\theproofstep: Conclusion of the proof]
    By Theorem~\ref{thm:projection-1d}, the operators $B^{(\theta)} : W_n\to W_N$ and
    $A^{(\theta)} : W_N\to W_n$ given by
    \begin{subequations}\label{eq:proof:thm:factor-1d:operators}
      \begin{align}
        B^{(\theta)} f
        &= \sum_{I\in \mathcal{D}_{\leq n}} \frac{\langle f, h_I\rangle}{\|h_I\|_2^2}
          b_I^{(\theta)},
        && f\in W_n,
           \label{eq:proof:thm:factor-1d:operators:a}\\
        A^{(\theta)} f
        &= \sum_{I\in \mathcal{D}_{\leq n}}
          \frac{\langle f, b_I^{(\theta)}\rangle}{\|b_I^{(\theta)}\|_2^2} h_I,
        && f\in W_N
           \label{eq:proof:thm:factor-1d:operators:b}
      \end{align}
    \end{subequations}
    satisfy the estimates
    \begin{equation}\label{eq:proof:thm:factor-1d:estimates}
      \|B^{(\theta)}\|
      \leq 1
      \qquad\text{and}\qquad
      \|A^{(\theta)}\|
      \leq 1.
    \end{equation}
    The operator $P^{(\theta)} : W_N\to W_N$ defined by $P^{(\theta)} = B^{(\theta)}A^{(\theta)}$ is
    a norm~$1$ projection given by
    \begin{equation}\label{eq:proof:thm:factor-1d:P:definition}
      P^{(\theta)} f
      = \sum_{I\in\mathcal{D}_{\leq n}}
      \frac{\langle f, b_I^{(\theta)}\rangle}{\|b_I^{(\theta)}\|_2^2}
      b_I^{(\theta)},
      \qquad f\in W_N.
    \end{equation}
    Now, we define the subspace $Y$ by $Y = P^{(\theta)}(W_N)$, and note that the following diagram
    is commutative:
    \begin{equation}\label{eq:proof:thm:factor-1d:commutative-diagram:preimage}
      \vcxymatrix{W_n \ar[rr]^{\Id_{W_n}} \ar[d]_{B^{(\theta)}}
        && W_n\\
        Y \ar[rr]_{\Id_Y} && Y \ar[u]_{A^{(\theta)}_{|Y}}}
      \qquad \|B^{(\theta)}\|,\|A^{(\theta)}_{|Y}\| \leq 1.
    \end{equation}

    Next, we define $U^{(\theta)} : W_N\to Y$ by putting
    \begin{equation}\label{eq:proof:thm:factor-1d:almost-inverse}
      U^{(\theta)} f = \sum_{I\in\mathcal{D}_{\leq n}}
      \frac{\langle f, b_I^{(\theta)}\rangle}
      {\langle Tb_I^{(\theta)}, b_I^{(\theta)}\rangle}
      b_I^{(\theta)},
      \qquad f\in W_N.
    \end{equation}
    By the $1$-unconditionality of the Haar system, the definition of $P^{(\theta)}$
    (see~\eqref{eq:proof:thm:factor-1d:P:definition}) and the
    estimates~\eqref{proof:thm:factor-1d:almost-diag:final},
    \eqref{eq:proof:thm:factor-1d:estimates}, we obtain
    \begin{equation}\label{eq:proof:thm:factor-1d:U-bound}
      \|U^{(\theta)}\|
      \leq \frac{\|P^{(\theta)}\|}{\delta - \eta_0 2^{n}}
      \leq \frac{1}{\delta - \eta_0 2^{n}}.
    \end{equation}
    Moreover, for all $g = \sum_{I\in\mathcal{D}_{\leq n}} a_I b_I^{(\theta)} \in Y$, we have the
    following identity:
    \begin{equation}\label{eq:proof:thm:factor-1d:crucial-identity}
      U^{(\theta)}Tg - g
      = \sum_{\substack{I,I'\in\mathcal{D}_{\leq n}\\I \neq I'}} a_{I'}
      \frac{\langle T b_{I'}^{(\theta)}, b_I^{(\theta)}\rangle}
      {\langle Tb_I^{(\theta)}, b_I^{(\theta)}\rangle} b_I^{(\theta)}.
    \end{equation}
    Note that Lemma~\ref{lem:block:basic-estimate-1d} yields
    $|a_{I'}| \leq \frac{\|g\|_{W_N}}{\|b_{I'}^{(\theta)}\|_{W_N}}$, thus we obtain
    from~\eqref{proof:thm:factor-1d:almost-diag:final} that
    \begin{equation}\label{eq:proof:thm:factor-1d:crucial-estimate:2}
      \|U^{(\theta)}Tg - g\|_{W_N}
      \leq \biggl\| \sum_{\substack{I,I'\in\mathcal{D}_{\leq n}\\I' \neq I}} a_{I'}
      \frac{\langle T b_{I'}^{(\theta)}, b_I^{(\theta)}\rangle}
      {\langle Tb_I^{(\theta)}, b_I^{(\theta)}\rangle}
      b_I^{(\theta)}
      \biggr\|_{W_N}
      \leq \frac{\eta_0 2^{3(n+1)}}{\delta-\eta_0 2^{n}} \|g\|_{W_N}.
    \end{equation}
    Now, let $I : Y\to W_N$ denote the operator given by $Iy = y$.
    By~\eqref{proof:thm:factor-1d:const:1}, we have that
    $\frac{\eta_0 2^{3(n+1)}}{\delta-\eta_0 2^{n}} < 1$;
    hence~\eqref{eq:proof:thm:factor-1d:crucial-estimate:2} yields
    \begin{equation}\label{eq:proof:thm:factor-1d:crucial-estimate:3}
      \|(U^{(\theta)}TI)^{-1} g\|_{W_N}
      \leq \frac{1}{1-\frac{\eta_0 2^{3(n+1)}}{\delta-\eta_0 2^{n}}} \|g\|_{W_N}.
    \end{equation}
    By~\eqref{eq:proof:thm:factor-1d:U-bound}, \eqref{eq:proof:thm:factor-1d:crucial-estimate:3}
    and~\eqref{proof:thm:factor-1d:const:1}, the operator $V^{(\theta)} : W_N\to Y$ given by
    $V^{(\theta)}=(U^{(\theta)}TI)^{-1}U^{(\theta)}$ satisfies the estimate
    \begin{equation*}
      \|V^{(\theta)}\|
      \leq \frac{1}{\delta - \eta_0 (2^{n} + 2^{3(n+1)})}
      \leq \frac{1+\eta}{\delta},
    \end{equation*}
    and the following diagram is commutative:
    \begin{equation}\label{eq:proof:thm:factor-1d:commutative-diagram:image}
      \vcxymatrix{
        Y \ar@/^/[rr]^{\Id_Y} \ar@/_{10pt}/[dd]_I \ar[rd]_{U^{(\theta)}TI} & & Y\\
        & Y \ar[ru]^{(U^{(\theta)}TI)^{-1}} &\\
        W_N \ar[rr]_T & & W_N \ar[lu]_{U^{(\theta)}}
        \ar[uu]_{V^{(\theta)}}
      }
      \qquad \|I\|\|V^{(\theta)}\| \leq \frac{1+\eta}{\delta}.
    \end{equation}
    Merging the diagrams~\eqref{eq:proof:thm:factor-1d:commutative-diagram:preimage}
    and~\eqref{eq:proof:thm:factor-1d:commutative-diagram:image} yields
    \begin{equation}\label{eq:proof:thm:factor-1d:commutative-diagram:merged}
      \vcxymatrix{%
        W_n \ar@/_{25pt}/[ddd]_{E} \ar[rr]^{I_{W_n}} \ar[d]^{B^{(\theta)}} & & W_n\\
        Y \ar@/^/[rr]^{\Id_Y} \ar@/_{10pt}/[dd]_I \ar[rd]_{U^{(\theta)}TI} & & Y \ar[u]^{A^{(\theta)}_{|Y}}\\
        & Y \ar[ru]^{(U^{(\theta)}TI)^{-1}} &\\
        W_N \ar[rr]_T & & W_N \ar[lu]_{U^{(\theta)}}
        \ar[uu]_{V^{(\theta)}} \ar@/_{25pt}/[uuu]_{F}
      }
      \qquad \|E\| \|F\| \leq \frac{1+\eta}{\delta}.
    \end{equation}
  \end{proofstep}
  Finally, reviewing the construction of the block basis $b_I^{(\theta)}$,
  $I\in\mathcal{D}_{\leq n}$ (see~\eqref{eq:proof:thm:factor-1d:coll:1}
  and~\eqref{eq:proof:thm:factor-1d:coll:2}) and the definitions of the operators involved in
  diagram~\eqref{eq:proof:thm:factor-1d:commutative-diagram:merged}, $N$ must be at least $m_0+n$;
  hence, considering the constants defined in~\eqref{proof:thm:factor-1d:const:1}
  makes~\eqref{eq:thm:factor-1d:dim} an appropriate choice for $N$.
\end{myproof}

\subsection*{Acknowledgments}\hfill\\
\noindent
It is my pleasure to thank P.~F.~X.~Müller for many helpful discussions.  Supported by the Austrian
Science Foundation (FWF) Pr.Nr. P28352.

\bibliographystyle{abbrv}
\bibliography{bibliography}

\end{document}